\newtheorem{theorem}{Theorem}
\newtheorem{corollary}[theorem]{Corollary}
\newtheorem{lemma}[theorem]{Lemma}
\newcommand{\Z}{{\mathbb Z}}
\newcommand{\N}{{\mathbb N}}
\newcommand{\Beta}{{\mathbf B}}
\newcommand{\email}[1]{\href{mailto:#1}{\texttt{{\small #1}}}}
\begin{document}
\title{Power-Law Tails in a Fitness-Driven Model for Biological Evolution\footnote{Research performed during \href{http://markov-chains-reu.math.uconn.edu}{Markov Chains REU} (Summer 2018), partially supported by NSA grant H98230-18-1-0044 to Iddo Ben-Ari.}}
\author{Carter Bedsole\\ \email{cbedsole@georgefox.edu}  
\and Iddo Ben-Ari \\ \email{iddo.ben-ari@uconn.edu}%\footnote{ \email{iddo.ben-ari@uconn.edu}} 
\and 
\and Grace O'Neil\\ \email{goneil@math.columbia.edu} 
}

\begin{comment} 
\author{
Carter Bedsole\\
George Fox University
\and
Iddo Ben-Ari\\
University of Connecticut\footnote{This work was performed in an REU funded by a grant from the National Security Agency  to Iddo Ben-Ari}
\and 
Grace O'Neil\\
University of Pennsylvania}
\end{comment} 
\maketitle

\begin{abstract}
We study a discrete-time stochastic process that can also be interpreted as a model for a viral evolution. A distinguishing feature of our process is power-law tails due to dynamics that resembles preferential attachment models. In the model we study, a population is partitioned into sites, with each site labeled by a uniquely-assigned real number in the interval  $[0,1]$ known as fitness. The population size is a discrete-time transient birth-and-death process  with probability $p$ of birth and $1-p$ of death. The fitness is assigned at birth according to the following rule: the new member of the population either ``mutates" with probability $r$, creating  a new site uniformly distributed on $[0,1]$  or ``inherits" with probability $1-r$, joining  an existing site with probability proportional to the site's size. At each death event, a member from the site with the lowest fitness is killed. The number of sites eventually tends to infinity if and only if $pr>1-p$. Under this assumption, we  show that as time tends to infinity, the joint empirical measure of site size and fitness (proportion of population in sites of size and fitness in given ranges) converges a.s. to the  product of a modified Yule distribution and  the uniform distribution on $[(1-p)/(pr),1]$. Our approach is based on the method developed in \cite{similar-but-different}. The model and the results were independently obtained by Roy and Tanemura in \cite{roy}.  %\href{arXiv:1909.09759}{https://arxiv.org/abs/1909.09759}. 
\end{abstract}  

\section{Introduction and Statement of Results}
%A virus such as HIV has a high propensity for mutation. This leads to a great variety of strains of the virus. These strains can be thought of as ``quasispecies,'' see publications by Eigen, \cite{Eigen:1971aa}, Eigen and Schuster, \cite{eigen_schuster}, and Nowak and May \cite{nowak_may}. We present an analysis of a stochastic process that we believe could be used to model quasispecies evolution. A similar model to the one we investigate has also been proposed to represent quasispecies evolution, see Ben-Ari, 

\subsection{Description of the Model}
The motivation for our work is the  paper of Ben-Ari and Schinazi, \cite{similar-but-different} who studied a model for the evolution of a quasispecies. 

Our model is a discrete-time model describing population evolution. The population size $T_n$ at time $n\in\Z_+$ is discrete-time birth and death process on the set of nonnegative integers $\Z_+$, with probability $p\in (0,1)$ of a birth and $1-p$ of a death: 
\begin{align*}  & P( T_{n+1}=T_n +1 | T_0,\dots, T_n) = p,\\
 & P(T_{n+1}=T_n -1| T_0,\dots, T_n) = (1-p){\bf 1}_{\{T_n >0\}}\\
   &P(T_{n+1}=T_n | T_0,\dots, T_n) = (1-p){\bf 1}_{\{T_n =0\}}
\end{align*} 
The event $\{T_{n+1}-T_n =1\}$ is a birth at time $n+1$ while the event $\{T_{n+1}-T_n =-1\}$ is a death at time $n+1$. We consider all members of the population at time $n=0$ as born at time $n=0$. 
%Let $p\in (0,1]$. Then the population evolves as follows:  
%$$ P(T_{n+1}=T_n +1 | T_0,\dots, T_n ) = p,  P(T_{n+1}=\max (T_n -1,0)|T_0,\dots,T_n) = 1-p.$$
 Each member of the population is assigned a fitness value in $[0,1]$ at its birth.
We will call all members of the population sharing a common fitness value a ``site'' at the given fitness value. 

Birth events are split into two different categories: mutation and inheritance. 
\begin{enumerate} 
\item Mutation. With probability $r$, independently of the past, there is a mutation: the new member is assigned a uniformly distributed fitness on $[0,1]$, independently of the past. 
\item\label{case:joing_site}  Inheritance.  With probability $1-r$, if the population size is not zero, the new member joins an existing site, and otherwise the fitness is assigned as in the mutation case. The probability of joining a site is proportional to the number of elements in the site. 
\end{enumerate}
At a death event a member with the lowest fitness is eliminated from the population. Write $U_n^k(f)$ for the number of sites of size $k$ at time $n$ with fitness $\ge f$. 

The main object of our investigation is the the asymptotic behavior of the empirical site-size distribution,  $\widehat {\mu_n^f}$: 
$$ \widehat{\mu_n^f}  (k) = \begin{cases} \delta_{0,k} & \mbox{ if }T_n=0 \\ 
\frac{ k U_n^k (f)}{T_n} & \mbox{otherwise.}\end{cases}$$
In other words, $\widehat{\mu_n^f}(k)$ is equal to the proportion of the population at time $n$ in sites of size $k$ with fitness $\ge f$ if the population is not empty, or is the delta measure at $0$ if the population is empty. Note that the family $\{\widehat{\mu_n}^f(\cdot):f\in [0,1]\}$ determines the joint empirical distribution of fitness and site-size. Specifically, for any  interval  $[a,b)\subset [0,1]$ and $B\subset \N$,
$$ \sum_{k\in B}\left( \widehat{\mu_n^a}(k) -\widehat{\mu_n^b}(k)\right).$$ 
is the proportion of the population in sites with fitness in $[a,b)$ and size in $B$. 

The model is closely  related to the Dirichlet process or the Chinese Restaurant Model \cite{aldous},  but it is fundamentally different because of the constant probability of generating new site as well as the elimination mechanism. In particular, our process is not exchangeable. 

\subsection{A similar model} 
This model is a variation of the one studied in  \cite{similar-but-different}. The differences in that model are:

\begin{itemize}
\item inheritance is uniform among all sites - each existing fitness value is equally likely to be selected at a birth event
\item at a death event the entire site with lowest fitness is eliminated. 
\end{itemize}
Note that this inheritance mechanism is like representation of states in the US Senate, where each state is equally represented, while the model to be studied here is  closer to (an ideal) representation of states in the US House of Representatives, where each state is represented in proportion to its population. 

For the model of \cite{similar-but-different}, the number of sites tends to infinity a.s. if and only if  $pr >1-p$, because the process counting the number of sites is itself a birth and death process. The main result of that paper is that the  random  proportion of sites of a given size and fitness in a certain range converges a.s. to a product  $\mbox{Geom}(\frac{pr-(1-p)}{p-(1-p)})$ and independent uniform of $[f_c,1]$, where $$f_c = \frac{1-p}{pr}.$$

Intuitively:  site sizes remain small and massive sites are exponentially rare, as there is no ``incentive'' to grow large sites.  
 
In our model, sizes of sites do matter, and we study how. Our main result shows that in our case site sizes exhibit a power-law decay. 

\subsection{Our work} 
A large number of real-life phenomena exhibit power-law decay, e.g.  \cite{newman}. In this context, it has two distinguishing features: the identification of sites by their fitness and the possibility of elimination of sites.    %Our model fits in the large framework of preferential-attachment models including urns and networks, see for example \cite{cotar} and the references within. 

We turn to the analysis of our model. For $f \in [0,1]$, let $T_n (f)$ be the number of members of the population at sites with fitness $\le f$. It immediately follows from the definition that the process $(T_n (f):n\in\Z_+)$ is a birth and death process with probability $pr$ of a birth and $1-p$ of a death. Therefore,  $T_n (f)$ is positive recurrent if $pr<1-p$, null recurrent if $pr=1-p$, and transient if $pr>1-p$. Thus, throughout our  discussion, we will assume \begin{equation} 
\label{eq:transience} 
pr >1-p.
\end{equation}
Under this assumption, define the critical fitness $f_c \in [0,1)$: 
$$f_c = \frac{1-p}{pr}.$$
In particular, any site with fitness $\le f_c$ will eventually be eliminated. 

To state our results, we recall Beta function $\Beta$ and the Yule-Simon distribution: 
$$ \Beta (x,y) = \frac{\Gamma(x)\Gamma(y)}{\Gamma(x+y)}.$$ 
A random variable $Z$ is Yule-Simon distributed with shape parameter $\rho>0$, denoted by $Z\sim \mbox{YS}(\rho)$ if 
$$ P( Z = k) = \rho \Beta (k,\rho+1),~k\in \N,$$
see \cite{wiki} and the references within. Since 
$$P(Z=1) = \rho \Beta (1,\rho+1) = \frac{\rho}{\rho+1}=1-\frac{1}{\rho+1}$$ 
it follows that the distribution of $\tilde Z = Z-1$ conditioned on $\tilde Z >0$ is given by the formula 
$$ P(\tilde Z  =k | \tilde Z >0) = \rho(\rho+1) \Beta (k+1,\rho+1),~k=1,\dots,$$ 
We write $\tilde Z \sim \mbox{YS}_{-1|>0}(\rho)$. As from Stirling's formula, $\Beta(x,y)\sim \Gamma(y) x^{-y}$ as $x\to \infty$, we recover 
 \begin{equation} \label{eq:powerbehavior} P(\tilde Z = k) \sim \rho \Gamma (\rho+2) k^{-\rho-1},~\mbox{ as }k\to\infty.
 \end{equation}
Our main result is the following. This was independently obtained using different methods in \cite{roy}.  
\begin{theorem}
\label{th:main}
Suppose that $pr>1-p$. Let $f>f_c$. Then
\begin{enumerate} 
\item for $k\in\N$, $\displaystyle \lim_{n\to\infty}\widehat{\mu_n^f}(k)=\beta_k$, a.s.  where 
\begin{equation}
\label{eq:the_limit}
%\beta_k = (1-f) \frac{r}{(1-r)(c-1)}P( \mbox{YS}_{-1|>0}(c-1) =k)
\beta_k = \frac{1-f}{1-f_c} P( \mbox{YS}_{-1|>0}(c-1) =k)
\end{equation}
and $c= \frac{2p-1}{p(1-r)}>1$. 
%$c-1 = \frac{2p-1-p+pr }{p(1-r)}=\frac{p(1+r)-1}{p(1-r)}$
%So that 
%$\frac{r}{(1-r)(c-1)}=\frac{ p(1-r)}{p(1+r)-1}\frac{r}{1-r}= \frac{pr}{p(1+r)-1}=\frac{1}{1-f_c}$ 
\end{enumerate} 
\end{theorem}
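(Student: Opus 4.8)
The plan is to reduce everything to the dynamics of the sites of fitness $\ge f$ (the "safe" sites), show that after an a.s.\ finite time these evolve as a pure preferential-attachment-with-immigration scheme that decouples from the low-fitness bulk, and then identify the resulting size distribution with the modified Yule law by solving a deterministic recursion. Throughout I use the standing assumption $pr>1-p$ and the hypothesis $f>f_c$.

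First I would record the two laws of large numbers that drive all the normalizations. Since $(T_n)$ is a transient birth-and-death process with upward drift $p-(1-p)=2p-1>0$, a standard argument gives $T_n/n\to 2p-1$ a.s. Next, because a death always removes a member of currently-lowest fitness, a safe site is shielded from death as long as some member of fitness strictly below $f$ is present; since $T_n(f)$ is transient under \eqref{eq:transience} we have $T_n(f)\to\infty$ a.s., so such members are present for all large $n$, and hence there is an a.s.\ finite random time $\tau$ past which no site of fitness $\ge f$ is ever hit by a death. From $\tau$ on, the only events changing the safe configuration are (i) a mutation landing in $[f,1]$, which creates a new safe site of size $1$ with probability $pr(1-f)$ per step, and (ii) an inheritance falling into a safe site of size $k$, which moves it to size $k+1$ with probability $p(1-r)\,kU_n^k(f)/T_n$ per step. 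This is precisely the preferential-attachment structure that produces a Yule law.

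I would then run a fluid-limit / stochastic-approximation analysis. Writing the Doob decomposition of $U_n^k(f)$, its predictable one-step drift is $pr(1-f)\,\delta_{k,1}+p(1-r)\bigl[(k-1)U_n^{k-1}(f)-kU_n^k(f)\bigr]/T_n$ (with $U_n^0(f)\equiv 0$). Substituting the ansatz $U_n^k(f)\approx u_k n$ and $T_n\approx(2p-1)n$ and setting $a=p(1-r)/(2p-1)=1/c$, the stationary relations become $u_1(1+a)=pr(1-f)$ and $u_k(1+ak)=a(k-1)u_{k-1}$ for $k\ge 2$, so that $u_k/u_{k-1}=(k-1)/(c+k)$. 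Because $\widehat{\mu_n^f}(k)=kU_n^k(f)/T_n\to ku_k/(2p-1)=:\beta_k$, the limit obeys $\beta_k/\beta_{k-1}=k/(c+k)$, which is exactly the size ratio of the shifted-and-conditioned Yule law $\mbox{YS}_{-1|>0}(c-1)$. Matching the total mass $\sum_k\beta_k$ with the limiting safe-mass fraction $S_n/T_n$, where $S_n=\sum_k kU_n^k(f)$ and a direct drift computation gives $S_n/T_n\to (1-f)/(1-f_c)$, then fixes the prefactor and yields \eqref{eq:the_limit}.

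The part demanding the real work, and where I expect the method of \cite{similar-but-different} to be essential, is upgrading this heuristic to almost-sure convergence. Two difficulties combine: the inheritance rate couples the safe configuration to the entire population through the factor $1/T_n$, so the recursion becomes autonomous only after dividing by $T_n$ and invoking the LLN for $T_n$; and the state $(U_n^k(f))_{k\ge 1}$ is infinite-dimensional, so no finite-dimensional ODE theorem applies directly. The plan is to control the martingale parts by $L^2$ estimates—their increments are bounded and their predictable quadratic variations are $O(n)$, so after normalization by $n$ they vanish a.s.\ along the whole sequence via Azuma and Borel--Cantelli—and then to prove $U_n^k(f)/n\to u_k$ by induction on $k$: the $k=1$ coordinate closes on itself and converges first, after which each $U_n^k(f)/n$ is driven by the already-controlled $U_n^{k-1}(f)/n$, giving a triangular system solvable coordinate by coordinate. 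The remaining delicate point is a uniform tail bound on $\sum_{k\ge K}kU_n^k(f)/T_n$ guaranteeing no escape of mass to $k=\infty$, so that the limit is a genuine probability measure.
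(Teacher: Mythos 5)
Your route is genuinely different from the paper's: the paper proves the existence of the limit by a stopping-time ``band-crossing'' argument (comparing $U_n^k/T_n$ to a dominating random walk between the times $\sigma_l,\tau_l^u,\tau_l^d$, and killing the bad events with the large-deviations Lemma \ref{lem:LDP} plus Borel--Cantelli), whereas you propose a stochastic-approximation scheme for $U_n^k(f)/n$. Your reductions are correct and parallel the paper's: protection of fitness-$\ge f$ sites from deaths after an a.s.\ finite time (though the process to invoke is the count of members with fitness $<f$, transient because $prf>1-p$, i.e.\ because $f>f_c$, not merely because of \eqref{eq:transience}), the LLN for $T_n$, the drift identities, and the recursion $\beta_k/\beta_{k-1}=k/(k+c)$ identifying the $\mbox{YS}_{-1|>0}(c-1)$ law. (Incidentally, the mass-matching step and the tail bound you worry about at the end are not needed for the theorem itself: $\beta_1$ is already pinned down by the $k=1$ equation and the ratios determine the rest; tightness is a corollary in the paper, not an ingredient.)

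The gap is at the step you dispose of in one clause: ``the $k=1$ coordinate closes on itself and converges first, after which each $U_n^k(f)/n$ is driven by the already-controlled $U_n^{k-1}(f)/n$.'' Proving that a single self-coupled coordinate converges is exactly the main difficulty of the theorem, and it does not follow from your martingale control alone. Concretely: after Azuma--Borel--Cantelli, writing $y_n=U_n^k/T_n$ and $\bar y_n$ for its Ces\`aro average, what you get is $\gamma y_n = C_k - p(1-r)k\,\bar y_n + o(1)$, where $C_k$ is the limiting inflow from level $k-1$; the natural sandwich then yields only $(\gamma-p(1-r)k)\left(\limsup_n y_n-\liminf_n y_n\right)\le 0$, which is vacuous as soon as $k\ge c=\gamma/(p(1-r))$ --- precisely the large-$k$ regime where the power law lives. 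To close the gap you must exploit the extra stability hidden in the $1/n$ normalization: with $z_n=U_n^k/n$ the recursion reads $z_{n+1}=z_n+\frac{1}{n+1}\left[C_k-\left(1+p(1-r)k/\gamma\right)z_n+o(1)\right]+\frac{\Delta M_{n+1}}{n+1}$, whose mean field has slope $-\left(1+p(1-r)k/\gamma\right)<0$ for \emph{every} $k$, and then a Robbins--Monro/ODE-method lemma (using a.s.\ convergence of $\sum_m \Delta M_m/m$, or equivalently the decay $\prod_{j=m+1}^n\left(1-(1+\kappa)/j\right)\asymp (m/n)^{1+\kappa}$ of the fundamental solution) gives a.s.\ convergence coordinate by coordinate. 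With such a lemma stated and proved, your plan goes through for all $k$ and is a legitimate alternative to the paper's stopping-time argument; without it, the proposal asserts rather than proves the theorem's core.
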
 
It therefore follows from \eqref{eq:powerbehavior} that 
 \begin{equation} \label{eq:powerlaw} \beta_k \sim \frac{1-f}{1-f_c} (c-1) \Gamma(c+1)k^{-c},\mbox{ as }k\to\infty.
 \end{equation}
 
The main difficulty in proving the theorem is in showing that the limit in part 1 exists. This will be the main part of the proof.  The key to the proof is to show that the system exhibits a behavior similar to mean reversion. This proof technique is an adaptation of the method developed in \cite{similar-but-different}, and our work is also an illustration of its power, robustness, and applicability for more general evolution mechanisms. Once the limit is established, the particular form \eqref{eq:the_limit} follows directly through soft arguments. Before turning to the proof, we wish to give a heuristic argument for the formula for $\beta_k$ and discuss several additional aspects and corollaries.
 
Here are results of simulations. In all figures, we considered the case $p=0.8$, $r=0.8$ 
\begin{figure}[H]
\centering
	\includegraphics[scale=0.5]{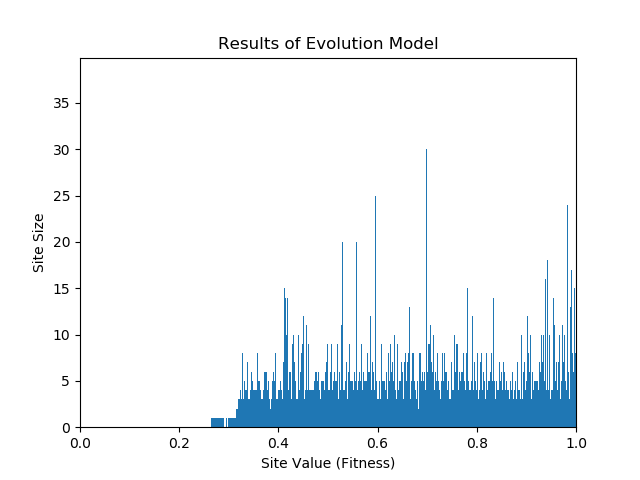}
    \caption{Results of Simulating Model \\ 	$p=0.8,~ r = 0.8,~ n=10^5$}
    \label{sim_results}
\end{figure}
In Figure \ref{sim_results} the x-axis represents the sites fitness values $[0,1]$, and the y-axis represents the number of members with that given fitness value after the $10^5$ steps of the simulation. Notice the sharp drop-off at critical fitness $f_c=\frac{1-p}{pr}=\frac{.2}{.64}=.3125$. No site below the critical fitness value has size $>1$.
In Figure \ref{hist} we arranged data according to site size across all fitness values ($f=0$). In Figure \ref{dontknowwhatproblem} we show the difference between the simulated probabilities in Figure 2 and the theoretical limit probabilities relative to the latter.  
\begin{figure}[H]
	\centering
	\includegraphics[scale=0.6]{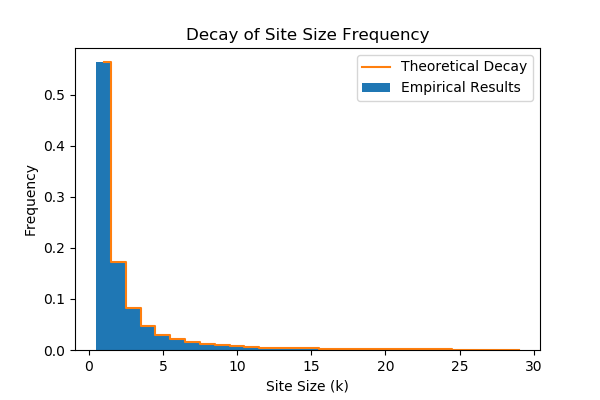}
    \caption{Histogram of Frequency of Site Sizes \\ 	$p=0.8, r = 0.4, n=5*10^6$}
    \label{hist}
   %\label{fig:sites}
\end{figure}

\begin{figure}[H]
	\centering
    \includegraphics[scale=0.6]{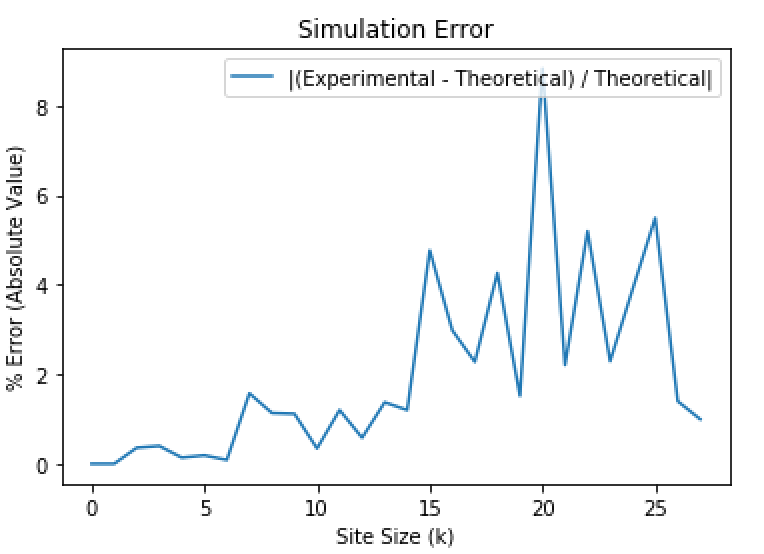}
    \caption{Percent Error of Simulation Results \\ $p=0.8, r = 0.4, n=5*10^6$}
\label{dontknowwhatproblem}
\end{figure}
 As we already noticed, if $f>f_c$, there exists some time $n$ after which no deaths from sites with fitness $f$ or higher will occur. Since we assume $\lim_{n\to\infty}\widehat {\mu_n^f}(1)=\beta_1$, the number of sites of size $1$ with fitness $\ge f$ grows at rate $\gamma \beta_1$. In addition, each step, the number of sites of size $1$ with fitness $\ge f$ will either increase by one in a mutation event or will decrease by one in the case of inheritance. Thus, we expect the following to hold:
\begin{equation} 
\label{eq:beta_equation} \gamma \beta_1 = pr (1-f) - p(1-r) \beta_1.
\end{equation}
That is 
$$ \beta_1 = (1-f)\frac{pr}{\gamma + p(1-r)}.$$ 
As $c= \frac{\gamma}{p(1-r)}$,
%$$ 1-f_c =1- \frac{1-p}{pr}= \frac{pr-p+p-(1-p)}{pr}= \frac{\gamma -p(1-r)}{pr},$$  
$$ 1+c  = \frac{\gamma + p(1-r)}{p (1-r)},$$ 
so we conclude with 
$$ \beta_1 = (1-f) \times \frac{1}{1+c}\times \frac{r}{1-r}.$$ 
%Since $\gamma=p-(1-p)$, and $c= \gamma / p (1-r)$, %$$ \beta_1 = (1-f) \frac{pr}{\gamma+p(1-r)}= (1-f) (c (1-r)/r + (1-r)/r)^{-1}= (1-f)\frac{r}{1-r}\frac{1}{c+1} \frac{r}{r-1}.$$ 
%Recall that 

We continue to sites of large size. Suppose that $\lim_{n\to\infty} \widehat{\mu_n}(k-1)$ exists and is equal to $\beta_{k-1}$. We introduce a new quantity, $\alpha_k$, equal to the asymptotic growth rate of the number of sites of size $k$ with fitness $\ge f$. Then $k \alpha_k$ gives the rate of growth of part of the population in sites of size $k$ with fitness $\ge f$, and since the growth of the population is at rate $\gamma$, we have 
$$ \beta_k = \frac{ k \alpha_k}{\gamma}.$$ 

To find a general formula for $\beta_k$, observe that the number of sites of size $k$ with fitness $\ge f$ will increase by $1$ in case of a mutation of a site of size $k-1$, or will decrease by $1$ in case of a mutation of site of size $k$. This leads to 
\begin{equation} 
\label{eq:betak_equation} 
 \alpha_k = p(1-r) \left ( \beta_{k-1} - \beta_{k}\right).
\end{equation} 
We draw several conclusions from this equation. First we obtain an explicit expression for $\beta_k$. Since $\beta_k = \frac{k \alpha_k}{\gamma}$, it follows that 
$$ \frac{ \gamma \beta_k}{k} = p(1-r) (\beta_{k-1} - \beta_k).$$ 
Therefore, 
$$ \frac{ \beta_{k-1}}{\beta_k } = \frac{p(1-r) + \gamma/k}{ p(1-r)} = 1 + \frac{c}{k}.$$
This leads to the following formula: 
\begin{equation} 
\begin{split} 
\beta_k & = \beta_1 \prod_{j=2}^k (1+\frac{c}{j})^{-1}\\ 
 & = (1-f) \frac{r}{1-r} \frac{1}{1+c}\prod_{j=2}^k (1+\frac{c}{j})^{-1}\\
 & = (1-f) \times \frac{r}{1-r} \frac{ k! }{\prod_{j=2}^k (j+c)}\\
  & = (1-f) \frac{ r }{1-r} \frac{ \Gamma(c+1)\Gamma (k+1) }{\Gamma (c+k+1)}\\
  & = (1-f) \frac{r}{1-r} \frac{c}{c-1} (c-1)\Beta(k+1,c)\\ %\binom{c+k}{k}^{-1}.\\
  & = (1-f) \frac{r}{(1-r)(c-1)}P( \mbox{YS}_{-1|>0}(c-1)=k).
\end{split} 
\end{equation} 
Next, observe that 
$$\frac{1}{c-1} = \frac{p(1-r)}{2p-1-p+pr }=\frac{p(1-r)}{p(1+r)-1},$$ 
so that 
$$\frac{r}{(1-r)(c-1)}=\frac{ p(1-r)}{p(1+r)-1}\frac{r}{1-r}= \frac{pr}{p(1+r)-1}=\frac{1}{1-f_c},$$ 
giving \eqref{eq:the_limit}. 
\begin{comment} 
Next, observe that  $c-1= \frac{\gamma-p(1-r)}{p(1-r)}$, and 
$$ 1-f_c =1- \frac{1-p}{pr}= \frac{pr-p+p-(1-p)}{pr}= \frac{\gamma -p(1-r)}{pr},$$
and so 
$$ \frac{r}{1-r} \frac{1}{c-1} = \frac{1}{1-f_c},$$ 
Therefore 
$$ \beta_k = \frac{1-f}{1-f_c}\times b_+(k+1,c),$$ 
and in particular, 
$$ \sum_{k=1}^\infty \beta_k = \frac{1-f}{1-f_c}.$$ 

Thus, the heuristic derivation coincides with the expression in Theorem \ref{th:main}. 
Next we look at tightness under these asymptotics. Rewrite \eqref{eq:betak_equation} as 
\begin{align*} 
\gamma \beta_k & = p(1-r) (k \beta_{k-1} - k \beta_k)\\
 & = p(1-r) ((k-1) \beta_{k-1} - k \beta k ) + p(1-r) \beta_{k-1}.
 \end{align*} 
 Fixing some $M\in\N$ we obtain 
 \begin{align*}
     \gamma \sum_{k=1}^M \beta_k = p(1-r) (\beta_1 - M \beta_M) + p(1-r) \sum_{k=1}^{M-1} \beta_k+\gamma \beta_1
 \end{align*} 
 Therefore 
 $$ (\gamma- p(1-r)) \sum_{k=1}^M \beta_k = (p(1-r) +\gamma) \beta_1 - p(1-r) (M+1) \beta_M,$$
 or 
 \begin{align*}  \sum_{k=1}^M \beta_k  &= \frac{ (1-f) pr-p(1-r)(M+1)\beta_M}{\gamma - p(1-r)}.
 \end{align*} 
 Since the lefthand side is noncreaseing in $M$, it follows that $(M+1)\beta_M$ is nonincreasing and therefore converges to a nonnegative constant. By Fatou's lemma, the limit as $M\to\infty$ of the lefthand side is $\le 1$. Therefore $\lim_{M\to\infty} (M+1)\beta_M = 0$ (this is a fact we recovered independently earlier), and so we recover
 \begin{equation} 
 \label{eq:sum} 
 \sum_{k=1}^\infty \beta_k = (1-f)\frac{  pr}{\gamma-p(1-r)}= 
 \frac{1-f}{1-f_c},
 \end{equation}
 \end{comment} 
 Thus we obtain the following tightness result: 
\begin{corollary} 
 Suppose $pr>1-p$. Then the empirical site size distribution is tight in the sense that for any $\epsilon>0$ there exists $M\in\N$ and $f>f_c$ such that $ \widehat{\mu_{n}^f} (\{1,\dots,M\})>1-\epsilon$.
 \end{corollary}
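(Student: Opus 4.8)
The plan is to deduce the corollary directly from the explicit formula for the limiting masses $\beta_k$ established in Theorem \ref{th:main}, without returning to the dynamics of the process. The essential observation is that the theorem identifies $\widehat{\mu_n^f}(k)$ as converging a.s.\ to $\beta_k = \frac{1-f}{1-f_c}P(\mathrm{YS}_{-1|>0}(c-1)=k)$, and since $\mathrm{YS}_{-1|>0}(c-1)$ is a genuine probability distribution on $\N$, we have $\sum_{k=1}^\infty P(\mathrm{YS}_{-1|>0}(c-1)=k)=1$, hence $\sum_{k=1}^\infty \beta_k = \frac{1-f}{1-f_c}$. This is a convergent series, so its tail can be made arbitrarily small.

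First I would fix $\epsilon>0$ and choose $f>f_c$ close enough to $f_c$ that $\frac{1-f}{1-f_c} > 1-\epsilon/2$; this uses that $\frac{1-f}{1-f_c}\to 1$ as $f\downarrow f_c$. With $f$ now fixed, I would use the convergence of the series $\sum_k \beta_k$ to pick $M\in\N$ so large that $\sum_{k=1}^M \beta_k > \frac{1-f}{1-f_c} - \epsilon/2 > 1-\epsilon$. The power-law estimate \eqref{eq:powerlaw}, giving $\beta_k\sim \frac{1-f}{1-f_c}(c-1)\Gamma(c+1)k^{-c}$ with $c>1$, guarantees the tail is summable and quantifies how large $M$ must be, though for the qualitative statement mere convergence suffices.

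Finally I would transfer this bound on the deterministic limit back to the empirical measure. By the a.s.\ convergence in Theorem \ref{th:main}, for each $k\in\{1,\dots,M\}$ we have $\widehat{\mu_n^f}(k)\to\beta_k$ a.s.; summing over the finite set $\{1,\dots,M\}$ preserves the convergence, so $\widehat{\mu_n^f}(\{1,\dots,M\})\to\sum_{k=1}^M\beta_k > 1-\epsilon$ a.s. Hence for all sufficiently large $n$ (a.s.) the empirical mass on $\{1,\dots,M\}$ exceeds $1-\epsilon$, which is precisely the asserted tightness.

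I do not anticipate a genuine obstacle here, since the corollary is a soft consequence of the theorem; the only point requiring a little care is that the statement combines two limits --- sending $f\downarrow f_c$ to capture most of the total mass $\frac{1-f}{1-f_c}$, and sending $M\to\infty$ to capture most of the Yule--Simon mass --- so one must order the choices correctly, fixing $f$ first and then $M$, rather than trying to optimize them simultaneously.
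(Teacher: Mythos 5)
Your proposal is correct and follows essentially the same route as the paper: the corollary is treated as a soft consequence of the limit formula \eqref{eq:the_limit}, using that the Yule--Simon factor sums to one so that $\sum_{k}\beta_k = \frac{1-f}{1-f_c}\to 1$ as $f\downarrow f_c$, then choosing $f$ first, $M$ second, and invoking the a.s.\ convergence of $\widehat{\mu_n^f}(k)$ over the finite set $\{1,\dots,M\}$. Your remark about the order of the two choices is exactly the point the paper's derivation implicitly relies on.
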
 
 The tightness is important because it shows that no mass spills to $+\infty$: the proportion of population in sites larger than $M$ eventually converges to a constant which tends to $0$ as $M\to\infty$. The tightness also allows us to look at proportion of sites with given sizes rather than proportions of members of the population in sites of given sizes. For each $k$, the proportion of sites of size $k$ with fitness $\ge f$ is equal to 
 $$\frac{U_n^k(f)}{\sum_{j=1}^\infty U_n^j(f)}.$$
This proportion can be rewritten as 
$$ \frac{T_n \widehat{\mu_n^f(k)/k}}{T_n \sum_{j=1}^\infty \widehat{\mu_n^f}(j)/j}.$$ 
By the tightess, for every $\epsilon>0$, there exists $M$ such that $\sum_M^\infty \widehat{\mu_n^f}(j)/j<\epsilon$, we obtain
\begin{corollary}
The proportion of sites of size $k$ with fitness $\ge f$ converges a.s. as $n\to\infty$ to $C\beta_k/k$, where $C=(\sum_{k=1}^\infty \frac{\beta_k}{k})^{-1}$. 
\end{corollary}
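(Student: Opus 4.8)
The plan is to build directly on the rewriting displayed above. Since $U_n^k(f)=T_n\,\widehat{\mu_n^f}(k)/k$ whenever $T_n>0$, the factor $T_n$ cancels between numerator and denominator, and the proportion of size-$k$ sites with fitness $\ge f$ equals
\[
\frac{\widehat{\mu_n^f}(k)/k}{\sum_{j=1}^\infty \widehat{\mu_n^f}(j)/j}.
\]
Because $pr>1-p$ forces $p>1/2$, the population process $T_n$ is transient, so $T_n>0$ for all large $n$ almost surely and this representation is eventually valid. By Theorem~\ref{th:main} the numerator converges a.s.\ to $\beta_k/k$, so the whole statement reduces to proving that the denominator $S_n:=\sum_{j\ge1}\widehat{\mu_n^f}(j)/j$ converges a.s.\ to $S:=\sum_{j\ge1}\beta_j/j=C^{-1}$. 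This $S$ is finite and strictly positive: by \eqref{eq:the_limit} one has $\sum_j\beta_j<\infty$, so $S\le\sum_j\beta_j<\infty$, while $S\ge\beta_1>0$; hence $C=(\sum_j\beta_j/j)^{-1}$ is a well-defined positive constant.

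The one genuine point is interchanging the limit $n\to\infty$ with the infinite sum defining $S_n$, and this is where the weight $1/j$ together with the total-mass (tightness) bound does the work. Fixing a cutoff $M$, I would split $S_n$ into a head $\sum_{j\le M}\widehat{\mu_n^f}(j)/j$ and a tail $\sum_{j>M}\widehat{\mu_n^f}(j)/j$. The head is a finite sum, so it converges a.s.\ to $\sum_{j\le M}\beta_j/j$ by Theorem~\ref{th:main}. For the tail I would use that the empirical measure has total mass at most one, namely $\sum_{j\ge1}\widehat{\mu_n^f}(j)=(\text{population with fitness}\ge f)/T_n\le1$; combined with $1/j\le 1/(M+1)$ for $j>M$ this gives the uniform bound
\[
\sum_{j>M}\frac{\widehat{\mu_n^f}(j)}{j}\le \frac{1}{M+1}\sum_{j>M}\widehat{\mu_n^f}(j)\le\frac{1}{M+1},
\]
and the identical estimate applied to the $\beta_j$ yields $\sum_{j>M}\beta_j/j\le 1/(M+1)$.

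With these pieces the convergence $S_n\to S$ follows from a routine $\epsilon/3$ argument: given $\epsilon>0$, first choose $M$ so large that both tails lie below $\epsilon/3$, then choose $n$ large enough that the head is within $\epsilon/3$ of $\sum_{j\le M}\beta_j/j$, giving $|S_n-S|<\epsilon$ eventually, a.s. Dividing the numerator by $S_n$ and using $S=C^{-1}>0$ then produces the a.s.\ limit $C\beta_k/k$, as claimed. I do not expect a real obstacle here: the substantive convergence is exactly Theorem~\ref{th:main}, and because the $1/j$ weighting makes the tail of $S_n$ uniformly negligible, the interchange of limit and sum — ordinarily the delicate step — comes essentially for free from the fact that $\widehat{\mu_n^f}$ is a sub-probability measure.
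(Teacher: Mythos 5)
Your proposal is correct, and structurally it is the same argument the paper gives: cancel $T_n$ to write the proportion as $\bigl(\widehat{\mu_n^f}(k)/k\bigr)\big/\sum_{j\ge 1}\widehat{\mu_n^f}(j)/j$, apply Theorem \ref{th:main} to the numerator and to any finite head of the denominator, control the tail of the denominator uniformly in $n$, and finish with an $\epsilon/3$ interchange of limit and sum. The one genuine difference is where the uniform tail bound comes from. The paper invokes its tightness corollary to assert that for every $\epsilon>0$ there is an $M$ with $\sum_{j>M}\widehat{\mu_n^f}(j)/j<\epsilon$; you instead observe that $\widehat{\mu_n^f}$ is a sub-probability measure, so the $1/j$ weighting alone gives $\sum_{j>M}\widehat{\mu_n^f}(j)/j\le 1/(M+1)$ with no appeal to tightness at all. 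Your version is thus slightly more elementary and self-contained: this corollary, unlike the statement about tightness of the size distribution itself, never needed the tightness result, only the trivial mass bound. You are also more careful than the paper on two small points it leaves implicit, namely that $T_n>0$ (and hence the representation is valid) for all large $n$ by transience, and that the limiting denominator $S=\sum_j \beta_j/j$ satisfies $0<S<\infty$ (using $\beta_1>0$ for $f<1$ and $\sum_j\beta_j=\frac{1-f}{1-f_c}<\infty$), which is what licenses dividing by $S_n$ in the limit.
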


Next, in light of \eqref{eq:sum},  rewrite the expression for $\beta_k$, by dividing and multiplying by $1-f_c$. This gives: 
$$\beta_k  = \frac{1-f}{1-f_c}\times (1-f_c) \frac{r}{1-r}c.$$ 
But  $$(1-f_c)\frac{r}{1-r} = \frac{pr - (1-p)}{p(1-r)}=\frac{p(r-1)+p-(1-p)}{p(1-r)}=c-1,$$
and therefore 
$$ \beta_k = \frac{1-f}{1-f_c}\times (1-f_c) \frac{r}{1-r}c \Beta(k+1,c)  = \frac{1-f}{1-f_c}\times \rho_k,$$ 
where $$\rho_k =(c-1) c \Beta(k+1,c).$$
But by  \eqref{eq:sum}, 
$$ \sum_{k=1}^\infty \rho_k = 1.$$
Let $\Beta_+(c)$ be the probability distribution with PMF $(\rho_k:k\in \N)$. Then we proved the following 
\begin{corollary}
The empirical distribution of site size and fitness of an individual converges a.s. to the product of $\Beta_+(c)$ and $U[f_c,1]$.   
\end{corollary}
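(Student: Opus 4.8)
The plan is to assemble the single-fitness, single-size limits of Theorem~\ref{th:main} into a statement about the full joint empirical measure, and then upgrade the resulting pointwise a.s.\ convergence to weak a.s.\ convergence by controlling the total mass. For $T_n>0$ let $\widehat{\nu_n}$ denote the probability measure on $\N\times[0,1]$ defined on rectangles by
$$\widehat{\nu_n}(\{k\}\times[a,b)) = \widehat{\mu_n^a}(k)-\widehat{\mu_n^b}(k),\qquad 0\le a<b\le 1,$$
which is exactly the joint law of the size and fitness of a uniformly chosen member, as noted after the definition of $\widehat{\mu_n^f}$; since $T_n\to\infty$ a.s.\ under \eqref{eq:transience}, $\widehat{\nu_n}$ is eventually well defined. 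The candidate limit is $\nu=\Beta_+(c)\otimes U[f_c,1]$, and the product (independence) structure is already built into the separated form $\beta_k=\frac{1-f}{1-f_c}\rho_k$, the factor $\rho_k$ being free of $f$ and the factor $(1-f)/(1-f_c)$ free of $k$. Accordingly, for $f_c<a<b\le 1$ and $k\in\N$, Theorem~\ref{th:main} gives a.s.
$$\widehat{\nu_n}(\{k\}\times[a,b)) \to \frac{1-a}{1-f_c}\rho_k-\frac{1-b}{1-f_c}\rho_k = \frac{b-a}{1-f_c}\rho_k = \nu(\{k\}\times[a,b)),$$
and intersecting over $k\in\N$ and rational $a,b\in(f_c,1]$ produces one event of full probability on which all of these hold.

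The main obstacle, and the only non-routine point, is to rule out loss of mass at and below the critical fitness $f_c$, where Theorem~\ref{th:main} gives no information, and escape of mass to $k=\infty$; I would dispatch both at once through conservation of total mass. Since $\widehat{\mu_n^0}(k)\ge\widehat{\mu_n^f}(k)$ for every $f>f_c$, letting $f\downarrow f_c$ yields $\liminf_n\widehat{\mu_n^0}(k)\ge\rho_k$ for each $k$; because $\sum_k\widehat{\mu_n^0}(k)=1=\sum_k\rho_k$, Fatou's lemma for series forces $\sum_k\liminf_n\widehat{\mu_n^0}(k)=1$ and hence $\liminf_n\widehat{\mu_n^0}(k)=\rho_k$, which a short subsequence argument (there is no room for any mass function to overshoot $\rho_{k_0}$ while all coordinates stay $\ge\rho_k$ and the total is fixed) upgrades to $\lim_n\widehat{\mu_n^0}(k)=\rho_k$. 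Thus the size-marginal converges to $\Beta_+(c)$ in total variation by Scheffé's lemma, ruling out escape to $k=\infty$, while $\widehat{\nu_n}(\{k\}\times[0,f_c])\le\widehat{\mu_n^0}(k)-\widehat{\mu_n^f}(k)\to\rho_k(f-f_c)/(1-f_c)$, whose infimum over $f>f_c$ is $0$, so no mass survives below $f_c$.

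Finally I would conclude weak convergence. For fixed $k$ the sub-probability measures $\lambda_n^k=\widehat{\nu_n}(\{k\}\times\cdot)$ satisfy $\lambda_n^k([f,1])\to\rho_k(1-f)/(1-f_c)$ at every $f\in(f_c,1]$ and $\lambda_n^k([0,1])\to\rho_k$; since the limit $\rho_k\,U[f_c,1]$ has a continuous distribution function, convergence of the tails at this dense set of continuity points together with convergence of total mass yields $\lambda_n^k\Rightarrow\rho_k\,U[f_c,1]$. Testing an arbitrary $g\in C_b(\N\times[0,1])$ then gives $\int g\,d\widehat{\nu_n}=\sum_k\int g(k,\cdot)\,d\lambda_n^k\to\sum_k\int g(k,\cdot)\,d(\rho_k\,U[f_c,1])=\int g\,d\nu$, where passage to the limit inside the sum is licensed by the dominated-convergence-for-series argument provided by $\widehat{\mu_n^0}(k)\to\rho_k$ with equal total mass. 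This is precisely a.s.\ weak convergence of $\widehat{\nu_n}$ to $\Beta_+(c)\otimes U[f_c,1]$, as claimed.
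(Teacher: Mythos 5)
Your proposal is correct, and its skeleton is the same as the paper's: combine the pointwise a.s.\ limits of Theorem~\ref{th:main} with the factorization $\beta_k=\frac{1-f}{1-f_c}\rho_k$ (the product structure) and the normalization $\sum_k\rho_k=1$ to identify the limit as $\Beta_+(c)\otimes U[f_c,1]$. Where you differ is in how the mass-conservation issue is handled. The paper derives the summation identity $\sum_k\beta_k=\frac{1-f}{1-f_c}$ from the recursion for the $\beta_k$'s, states this as a tightness corollary (no mass escapes to $k=\infty$), and then treats the joint weak convergence as immediate ``soft'' bookkeeping; it never spells out the exclusion of mass at fitness $\le f_c$ or the interchange of limit and sum. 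You supply exactly that missing glue: the Fatou-plus-Scheff\'e argument at $f=0$, which upgrades the per-$k$, per-$f$ limits to total-variation convergence of the size marginal, simultaneously rules out escape to $k=\infty$ and leakage below $f_c$, and licenses the term-by-term passage to the limit via generalized dominated convergence. This buys a self-contained and fully rigorous proof that uses only Theorem~\ref{th:main} and $\sum_k\rho_k=1$, without invoking the paper's separately stated tightness corollary (whose formulation, as written, even omits the quantifier ``for all $n$ sufficiently large''); the cost is length, where the paper's route is a two-line consequence of identities it has already displayed. One small point worth making explicit in your write-up: the a.s.\ events in Theorem~\ref{th:main} are indexed by $(k,f)$, so the intersection over all $k$ and rational $a,b\in(f_c,1]$ that you perform at the start is exactly what makes the subsequent $f\downarrow f_c$ supremum legitimate on a single full-measure event -- you do say this, and it is the right way to phrase it.
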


\begin{comment} 
\subsection{Result}
For a given $n,k,f$ let $U_n^k(f)$ denote the number of sites of size k and fitness $\geq f$ alive at time n. Define $T_n$ to be the total population size at time $n$.

\begin{theorem}
% * <idbenari@gmail.com> 2018-07-26T10:40:08.363Z:
%
% ^.
The proportion of members in a site of a given size, $\frac{kU_n^k(f)}{T_n}$, asymptotically converges to 

where $C$ is a normalizing constant that ensures that the proportions for all site sizes $k$ sums to $1$.
\end{theorem}

Results of a simulation of our model when $p=0.8$ and $r=0.4$ are given in Figures (\ref{hist}) and (\ref{error}). Figure (\ref{hist}) is a histogram of the frequency of site sizes at the end of $5$ million time steps. Results of the simulation are in blue and our theoretical result is in orange. Figure (\ref{error}) is the percent error for the data in Figure (\ref{hist}). The percent error is defined as
\begin{equation}
\%\text{ error} = \left|\frac{\text{Experimental - Theoretical}}{\text{Theoretical}}\right| * 100
\end{equation}

\end{comment} 

\begin{comment} 
\subsection{Organization}
The proof contains four sections. The first gives a construction for the process we are analyzing. The second gives a proof for the value of the critical fitness. The third proves the result when $\frac{p(1-r)}{2p-1} < \frac{1}{k+1}$. The fourth section proves the result in the general case and includes a lemma giving an asymptotic result regarding a bound for the total population divided by the time. 
\end{comment}
\section{Proof of main result} 
Before we prove the theorem, we will formally construct the model.From three independent  sequences of IID random variables, $(X_n:n\in\mathds{N})$, $(R_n:n\in\mathds{N})$, and $(V_n:n\in\mathds{N})$ where $X_i \sim Bern(p)$, $R_i \sim Bern(r)$, and $V_n \sim U[0,1]$. For $n \in \mathds{Z}_+$, recall that $T_n$ represents the total population at time $n$, $T_n$. Let $U_n^k(f)$ denote the number of sites of size $k$ with fitness $\ge f$ at time $n$. 
To define our process from the three IID sequences, we proceed as follows: 
\begin{enumerate}
\item If $X_{n+1} = 1$, then the population, $T_n$ increases by one individual.
	\begin{enumerate}
    \item If $R_{n+1} = 1$ or if $T_n = 0$, then the fitness of the new individual is $V_{n+1}$.
    \item Otherwise $R_{n+1} = 0$ and the new individual selects an existing individual and inherits its fitness. The existing individual is chosen uniformly among existing individuals; this creates a weighted average so that more populous sites are more likely to be selected. This can be done by partitioning the interval $[0,1]$ into $T_n$ intervals all of equal length. Each interval will be labeled by the fitness of each individual alive at time $n$. Then, the selected interval will correspond to the interval to which $V_{n+1}$ belongs.
    \end{enumerate}
\item If $X_{n+1} = 0$ and $T_n > 0$, then an individual from the site with the lowest fitness is removed from the population.
\end{enumerate}

Let $\gamma = p-(1-p)$ denote the local drift of the birth and death process governing the population size. 
%It is important to note that since $T_n$ is a simple random walk, it easily seen to converge a.s. to $p - (1 - p)=2p-1$. Call this value $\gamma$.

% \input{omit_specific.tex}

Now that we have formally constructed the process, we will briefly explain the argument we will use to prove Theorem \ref{th:main}. Our argument is based on the technique developed in \cite{similar-but-different}, adapted to the present case. The technique utilizes the fact both models exhibit a mean-reverting behavior pushing the system towards its expected value. The devil is in the details... 

Since we're dealing with ratios of random quantities, concentration will greatly simplify the analysis. This will be provided by the following large deviations lemma which is standard. We will give the proof in the Appendix.
\begin{lemma}
\label{lem:LDP}
Fix $\epsilon > 0$. Then there exist positive constants $\zeta,c>0$ such that for any $n \in \N$ 
\begin{equation}
P\left(\left|\frac{T_n}{n} - \gamma\right| > \epsilon\right)\leq ce^{-n\zeta}
\end{equation}
\label{boundLemma}
\end{lemma}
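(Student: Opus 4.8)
The plan is to recognize $(T_n)$ as a random walk reflected at the origin and then apply an exponential (Chernoff/Hoeffding) tail bound to the underlying \emph{free} walk. Concretely, from the driving sequence $(X_n)$ of the construction set $\xi_i = 2X_i - 1 \in \{-1,+1\}$, so the $\xi_i$ are i.i.d.\ with $\mathbb{E}[\xi_i] = p-(1-p) = \gamma$, and let $S_n = \sum_{i=1}^n \xi_i$ be the free walk, with $S_0 = 0$ (I take $T_0 = 0$; a fixed nonzero initial population merely shifts everything by a constant and is absorbed in the asymptotics). The dynamics in the construction are exactly $T_{n+1} = (T_n + \xi_{n+1})^+$, since a birth adds one and a death subtracts one unless $T_n = 0$, in which case the step is suppressed. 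Lindley's reflection identity then gives the closed form $T_n = \max_{0\le m\le n}(S_n - S_m) = S_n - \min_{0\le m\le n} S_m$. This representation is the engine of the proof: it yields both $T_n \ge S_n$ (take $m=0$) and the decomposition $\{T_n \ge t\} = \bigcup_{m=0}^n \{S_n - S_m \ge t\}$.

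The lower tail is immediate. Since $T_n \ge S_n$, we have $\{T_n/n \le \gamma - \epsilon\} \subseteq \{S_n \le (\gamma-\epsilon)n\}$, and $S_n$ is a sum of $n$ i.i.d.\ increments bounded in $[-1,1]$ with mean $\gamma n$. A standard Chernoff/Hoeffding bound gives $P(S_n - \gamma n \le -\epsilon n) \le e^{-\epsilon^2 n/2}$, which controls $P(T_n/n \le \gamma - \epsilon)$.

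The upper tail is where the reflection identity is genuinely needed, because $T_n \ge S_n$ makes a naive domination of $T_n$ by the free walk fail; this is the only nontrivial step, and it is mild. Writing $S_n - S_m = \sum_{i=m+1}^n \xi_i$, a sum of $n-m$ i.i.d.\ increments whose centered excess over the threshold $(\gamma+\epsilon)n$ is at least $(\gamma+\epsilon)n - (n-m)\gamma = \epsilon n + m\gamma \ge \epsilon n$, the Chernoff bound gives $P(S_n - S_m \ge (\gamma+\epsilon)n) \le e^{-\epsilon^2 n/2}$ uniformly in $m$, since the number of summands $n-m \le n$. A union bound over $m \in \{0,\dots,n\}$ then yields $P(T_n/n \ge \gamma + \epsilon) \le (n+1)\,e^{-\epsilon^2 n/2}$. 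The spurious polynomial prefactor $n+1$ is harmless: it is absorbed by slightly decreasing the exponential rate, so that $(n+1)e^{-\epsilon^2 n/2} \le c\,e^{-\zeta n}$ for suitable $c,\zeta > 0$ and all $n$.

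Combining the two tails, with $\zeta$ the smaller of the two rates and $c$ the larger constant, gives the stated two-sided estimate. The main obstacle, such as it is, lies in the upper tail: one cannot dominate $T_n$ by the free walk directly, so the reflection representation and the resulting union bound over the last descent are essential, whereas everything else reduces to routine exponential tail bounds for sums of bounded i.i.d.\ variables.
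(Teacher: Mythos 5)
Your proof is correct, and its skeleton matches the paper's: both arguments identify $(T_n)$ with the free walk reflected at the origin via the Lindley identity $T_n = S_n - \min_{0\le m\le n} S_m$ (the paper imports this coupling from an external lemma, while you derive it directly from the recursion $T_{n+1} = (T_n+\xi_{n+1})^+$, which is a nice self-contained touch), and both obtain the lower tail from $T_n \ge S_n$ plus a Chernoff/Hoeffding bound. Where you genuinely diverge is the upper tail. The paper bounds $T_n \le W_n - M_\infty$, where $M_\infty$ is the all-time infimum of the free walk, and invokes transience: by the strong Markov property the probability that the walk ever drops below $-k$ is $e^{-k\zeta_1}$, so $M_\infty$ has a geometric tail, and splitting on whether $M_\infty$ is smaller than $-\epsilon' n$ reduces everything to a large-deviation bound on $W_n$ alone. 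You instead write $\{T_n \ge (\gamma+\epsilon)n\} = \bigcup_{m=0}^n \{S_n - S_m \ge (\gamma+\epsilon)n\}$ and apply Hoeffding uniformly in $m$; your computation of the excess, $(\gamma+\epsilon)n - (n-m)\gamma = \epsilon n + m\gamma \ge \epsilon n$ (valid because $\gamma>0$), is exactly what makes the bound uniform, and the resulting factor $n+1$ is correctly absorbed into a slightly smaller exponential rate. The trade-off: your route is more elementary and self-contained, needing only bounded-increment concentration and a union bound, with no hitting-probability estimate or strong Markov argument; the paper's route isolates a cleaner probabilistic fact (positive drift gives an exponential tail for the global minimum) and yields a pure exponential bound with no polynomial prefactor. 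Both deliver the stated two-sided estimate, and your remark that a nonzero initial population only shifts constants is also fine.
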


\begin{proof}
\subsubsection*{Base case $k=1$.}
To simplify notation we omit the dependence on $k=1$ and on $f$, writing $\beta$ for $\beta_1$, $U_n$ for $U_n^1(f)$, etc. We recall that $\gamma= p -(1-p)$ is the asymptotic population growth rate. Also, in light of the heuristic \eqref{eq:beta_equation}, let 
$$\alpha =\gamma \beta= pr (1-f) - p(1-r)\beta.$$ 
% An argument will be made that this slower growth implies that $\limsup \frac{U_n}{n} \leq \alpha$ asymptotically. A similar statement can be made of $\liminf \frac{U_n}{n} \geq \alpha$.
Fix $\epsilon > 0$. Using the convention $\inf \emptyset = \infty$, define the following sequence of stopping times.
\begin{equation}
\begin{split}
\sigma _1 = \inf\{n\in\N:\frac{U_n}{T_n} - \beta \in (2\epsilon, 3\epsilon)\}
\\
\tau_{l}^u = \inf\{n>\sigma_{l}:\frac{U_n}{T_n}> \beta + 4\epsilon \}
\\
\tau_{l}^d = \inf\{n>\sigma_{l}: \frac{U_n}{T_n}< \beta + \epsilon \}. 
\end{split}
\label{stoppingTimes1}
\end{equation}
Thus, $\sigma_1$ is the first time that $U_n/T_n$ is in the band between $2\epsilon$ and $3\epsilon$ above $\beta$. Once $\sigma_l$ has been defined $\tau_l^u$ and $\tau_l^d$ are, respectively, the first time after $\sigma_l$ that $U_n/T_n$ drifts $\epsilon$ above the band (above $\beta + 4\epsilon$) or $\epsilon$ below the band (below $\beta+\epsilon$). Let 
$$\tau_{l} = \min\{\tau_{l}^u, \tau_{l}^d\},$$ and continue defining $\sigma_{l+1}$ through 
\begin{equation}
\sigma_{l + 1} = \inf\{n>\tau_{l} : \frac{U_n}{T_n} - \beta \in (2\epsilon, 3\epsilon)\}.
\end{equation}
On the event $\{\sigma _{l}<\infty\}$, and for nonnegative integer $m$ we define
\begin{equation}
Z_m = X_{\sigma_{l} + m}R_{\sigma_{l} + m}\textbf{1}_{\{V_{\sigma_{l} + m} \geq f \}}-X_{\sigma_{l} + m}(1-R_{\sigma_{l} + m})\textbf{1}_{\{1-V_{\sigma_{l} + m} \leq \beta + \epsilon \}}.
\end{equation}
 From the definition,
\begin{equation}
\label{eq:Z_expectation}
E[Z_m|\sigma_l < \infty] = pr(1-f)-p(1-r)(\beta + \epsilon) < pr(1-f)-p(1-r)\beta = \alpha.
\end{equation}
Note that $Z_m$ has no reference to the total population process. Yet, when  $1<m<\tau_l-\sigma_l+1$, the proportion of the population at time $\sigma_l+m-1$ in sites of size $1$ is at least $\beta +2\epsilon$, and therefore an inheritance event at time  $\sigma_l+m$ the number of sites of size $1$ will reduce by $1$ with probability larger than $\beta+2\epsilon$. Furthermore, we do not incorporate death events when defining  $Z_m$, and as a result $U_{\sigma_l+m}-U_{\sigma_l+m-1}\le Z_m$. We use the sequence $(Z_m:m\in\Z_+)$ as the increments of a random walk $(\Lambda_n:n\in\Z_+)$: 
$$\Lambda_n = U_{\sigma_l} + \sum_{m\leq n}Z_m.$$ 
And from the discussion above, $\Lambda_n \geq U_{\sigma_{l}+n}$ for $n<\tau_l+2-\sigma_l$. Define:
\begin{equation}
\begin{split}
\tau^{Z, u} = \inf\{n: \Lambda_{n} \geq (\beta +4\epsilon)T_{\sigma_l +n}\}
\\
\tau^{Z, d} = \inf\{n: \Lambda_{n} \leq (\beta +\epsilon)T_{\sigma_l +n}\},
\end{split}
\end{equation}
and so, $\tau^{Z,u}\le \tau^u_l$ and $\tau^{Z,d}\ge \tau^d_l$. This leads to the following sequence of inequalities: 
\begin{equation}
\label{eq:what_we_need_to_bound}
P(\tau_l^u < \tau_l^d, \sigma_l < \infty) \leq P(\tau^{Z,u}<\tau^{Z,d}, \sigma_l < \infty) \leq P(\tau^{Z,u} < \infty, \sigma_{l} < \infty).
\end{equation}
As $\Lambda_0=U_{\sigma_l}$ and $U_{\sigma_l}\le (\beta +3\epsilon) T_{\sigma_l}$, we obtain 
\begin{equation}
\begin{split}
\tau^{Z,u} &= \inf\{n:\Lambda_n - \Lambda_0 \geq (\beta + 4\epsilon)T_{\sigma_l + n} - U_{\sigma_l}\} 
\\
&\geq \inf\{n:\Lambda_n-\Lambda_0 \geq (\beta +4\epsilon)T_{\sigma_l + n} - (\beta +3\epsilon)T_{\sigma_l}\}.
\end{split}
\end{equation}

For $\rho>0$, define 
\begin{equation}
\label{eq:arho}
A_n(\rho) = \{\sup_{m \geq n}\left|\frac{T_m}{m} - \gamma\right|\geq\rho\}.
\end{equation}

From Lemma \ref{boundLemma} and the union bound, it follows that there exist $c$,$\zeta > 0$, such that for all $n \in \mathds{Z}_{+}$
\begin{equation}
P(A_n(\rho)) \leq ce^{-n\zeta}.
\end{equation}

By definition, $\sigma_l \geq l$. Therefore on $A_l^c(\rho)\cap \{\sigma_l<\infty\}$, we have $T_{\sigma_l} < (\gamma + \rho)\sigma_l$ and $T_{\sigma_l+n} \geq (\gamma-\rho)(\sigma_l + n)$ for all $n \in \mathds{Z}_+$. On this event, we can  substitute these inequalities for $T_{\sigma_l}$ and $T_{\sigma_l+n}$ to obtain 
\begin{equation}
\begin{split}
\tau^{Z,u} &\geq \inf\{n: \Lambda_n-\Lambda_0 \geq (\beta + 4\epsilon)(\gamma - \rho)(\sigma_l + n) - (\beta + 3\epsilon)(\gamma + \rho)\sigma_l\}\\
& = \inf\{n:\Lambda_n-\Lambda_0 \geq \epsilon' \sigma_l +\alpha' n \},
\end{split}
\end{equation}
where 
\begin{align*}
&\epsilon'= (\beta + 4\epsilon)(\gamma - \rho) - (\beta + 3\epsilon)(\gamma + \rho),\mbox{ and} \\
&\alpha' = (\beta+4\epsilon)(\gamma-\rho).
\end{align*} 
By choosing $\rho =\rho(\epsilon)$ sufficiently small, we can guarantee that 
both $ \epsilon' >0$ and $\alpha'>\alpha= \beta\gamma$.
With such a choice,  on $A_l^c(\rho) \cap \{\sigma_l < \infty \}$ we have
\begin{align}
\begin{split}
\tau^{Z,u} &\geq \inf\{n:\Lambda_n - \Lambda_0 - \epsilon'\sigma_l \geq \alpha'n\}
\\
&\geq \inf\{n:\Lambda_n - \Lambda_0 - \epsilon'l \geq \alpha'n\}
\\
&\geq \inf \{ n\geq \epsilon'l:\Lambda_n - \Lambda_0 \geq \alpha'n\}
\end{split}
\end{align}

Since by \eqref{eq:Z_expectation}, the random walk $\Lambda$ has increments with with an expectation smaller than $\alpha$ and by choice of $\rho$,  $\alpha < \alpha'$, the large deviations argument from Lemma \ref{boundLemma} guarantees that
\begin{equation}
P(\tau^{Z,u}<\infty, A_l^c(\rho), \sigma_l < \infty) \leq P\left(\sup_{n \geq \epsilon'l}\frac{1}{n}(\Lambda_n-\Lambda_0) > \alpha'\right) < ce^{-\zeta \epsilon' l}.
\end{equation}
This and \eqref{eq:what_we_need_to_bound} give 
$$ P(\tau_l^u < \tau_l^d, \sigma_l < \infty) \le P(A_l(\rho) ) + P( \tau^{Z,u}<\infty, \sigma_l < \infty, A_l^c(\rho)),$$
and it follows from Borel-Cantelli that 
$$P(\{\sigma_l < \infty\} \cap \{\tau_l^u < \tau_l^d\}\mbox{  i.o.}) = 0.$$ 
Therefore, with probability $1$, either 
\begin{enumerate} 
 \item $\{\sigma_l < \infty\}$ finitely often; or 
 \item $\{\sigma_l < \infty\}$ infinitely often and $\{\tau_l^u < \tau_l^d\}$ finitely often. 
 \end{enumerate}
We will examine both cases and will show that for each case 
$$\limsup_{n\to\infty} \frac{U_n}{T_n}\le \beta + 4\epsilon.$$
This allows to conclude that 
\begin{equation} 
\label{eq:upper_one} \limsup_{n\to\infty} \frac{U_n}{T_n}\le \beta,
\end{equation} 
because $\epsilon$ is arbitrary. Before examining the cases, we will now show how the proof follows from \eqref{eq:upper_one}. Indeed, plugging \eqref{eq:upper_one} into \eqref{eq:construction_level_one} and applying the law of large numbers, we obtain 
$$ \liminf_{n\to\infty} \frac{U_n}{n} \ge pr (1-f) - p(1-r) \beta.$$ 
But since $\lim_{n\to\infty} T_n /n = \gamma = p -(1-p)$, we conclude, 
\begin{equation}
       \label{eq:the_inversion_trick}
       \liminf_{n\to\infty} \frac{U_n}{T_n} \ge \frac{pr (1-f)-p(1-r) \beta}{\gamma}= \frac{\alpha}{\gamma}=\beta.
\end{equation}
It remains to complete the analysis of cases 1. and 2. above. We first observe that any case leading to $\liminf_{n\to\infty} \frac{U_n}{T_n} > \beta$ automatically leads to a contradiction due to the argument that lead to \eqref{eq:the_inversion_trick}. Therefore we necessarily have $\liminf_{n\to\infty} U_n/T_n \le \beta$. 

Now for case 1. By the last remark, we know that $U_n/T_n$ spends infinitely many times below $\beta+2\epsilon$. If $\limsup_{n\to\infty} U_n/T_n \ge \beta+4\epsilon$, then each time we cross from below $\beta+2\epsilon$ to above $\beta+4\epsilon$ we must cross through $(\beta+2\epsilon,\beta+3\epsilon)$, contradicting the assumption in this case that $\sigma_l<\infty$ for only finitely many $l$-s. Thus we have established that in case 1, $\limsup_{n\to\infty}U_n/T_n \le \beta +4\epsilon$. 

We turn to case 2. Here we must have $\limsup_{n\to\infty} U_n/T_n \ge \beta+ 4\epsilon$, but as we also must have $\liminf_{n\to\infty} U_n/T_n \le \beta$, and the same argument of crossing in the last paragraph gives that we necessarily have $\tau^d_l<\tau^u_l$ infinitely many times, a contradiction to the definition of the case. 
\begin{equation}
\label{eq:construction_level_one}
U_{n+1} - U_n = X_{n+1}R_{n+1}{\bf 1}_{\{V_{n+1}\ge f\}}-X_{n+1}(1-R_{n+1}){\bf 1}_{\{V_{n+1}\le U_n /T_n\}}
\end{equation} 
\subsubsection*{Induction Step.}
The induction step is essentially a repetition of the argument for the case $k=1$ {\it mutatis mutandis}. We fix $f>f_c$, and as in the case $k=1$ we drop dependence on $f$ in our notation. However, we will not drop dependence on $k$. 
Our induction hypothesis is that the proportion of population 
in sites fitness $\ge f$ and size $k$ is 
$$ \lim_{n\to\infty} \frac{k U_n^k}{T_n} = \beta_k, \mbox{ a.s.}$$ 
Equivalently, the number of sites of size $k$ with fitness $\ge f$ grows at a rate $\alpha_k$
$$ \lim_{n\to\infty}\frac{U_n^k}{n}=\alpha_k, \mbox{ a.s.},$$ 
where $$\alpha_k = \frac{\gamma \beta_k }{k}.$$ 
a.s. 

Again, we construct a process that represents the difference from one timestep of $U^{k+1}$ to another. Since death events occur only finitely often for sites with fitness $\ge f$ and because $\frac{ k U_n^k}{T_n} \to \beta_k $ a.s., we can construct the process so that 
\begin{equation}
\label{kStep}
\begin{split} 
U_{j+1}^{k+1}-U_{j}^{k+1} &\leq X_{j+1}\left(1-R_{j+1}\right)\left(\textbf{1}_{\{V_{j+1}\leq \frac{ k U_j^k}{T_n} \}}-\textbf{1}_{\{1-V_{j+1}<\frac{(k+1)U_{j}^{k+1}}{T_j}\}\}}\right)\\
 &\le X_{j+1}\left(1-R_{j+1}\right)\left(\textbf{1}_{\{V_{j+1}\leq \beta_{j}+\epsilon\}}-\textbf{1}_{\{1-V_{j+1}<\frac{(k+1)U_{j}^{k+1}}{T_j}\}}\right)
\end{split} 
\end{equation}
for all $n$ large enough. We then define a sequence of stopping times similar to (\ref{stoppingTimes1}): 
\begin{equation}
\begin{split}
\sigma_{1}^{k+1} = \inf\{n: \frac{(k+1)U_n^{k+1}}{T_n} - \beta_{k+1} \in (2\epsilon, 3\epsilon), \frac{kU_n^k}{T_n} < \beta_k + \epsilon\}
\\
\tau_l^{u, k+1} = \inf\{n>\sigma_l^{k+1}: \frac{(k+1)U_n^{k+1}}{T_n}>\beta_{k+1} + 4\epsilon\}
\\
\tau_l^{d, k+1} = \inf\{n>\sigma_l^{k+1}: \frac{(k+1)U_n^{k+1}}{T_n}<\beta_{k+1} + \epsilon\}
\end{split}
\end{equation}

As in the base case, we set $\tau_l^{k+1} = \min\{\tau_l^{u, k+1}, \tau_l^{d, k+1}\}$.  Additionally, define 

\begin{equation}
\theta_l^{k+1} = \inf\{n>\sigma_l^{k+1}: \frac{kU_n^{k}}{T_n}>\beta_k +\epsilon\}
\end{equation}

and inductively define 

\begin{equation}
\sigma_{l+1}^{k+1} = \inf\{n>\tau_l^{k+1}:\frac{(k+1)U_n^{k+1}}{T_n}-\beta_{k+1} \in (2\epsilon, 3\epsilon), \frac{kU_n^{k}}{T_n}<\beta_k+\epsilon\}
\end{equation}

By the induction hypothesis, we have that $\{\theta_l^{k+1} < \infty\}$ only finitely often. This sequence of times allows us to only compare the process when the increments satisfy the bound in (\ref{kStep}). %If we consider the timesteps where $\frac{kU_n^k}{T_n} \leq \beta_k + \epsilon$, which, by the induction hypothesis, holds for all but finitely many timesteps, then we are able to repeat the argument from the base case with limited modifications. 
Consider the event $\{\sigma_l^{k+1}< \infty\}$. Then for $m \in \mathds{Z}_+$, let
\begin{equation}
Z_m^{k+1} = X_{\sigma_l^{k+1}+m}\left(1-R_{\sigma_l^{k+1}+m}\right)\left(\textbf{1}_{\{V_{\sigma_l^{k+1}+m}\leq \beta_k+\epsilon\}}-\textbf{1}_{\{1-V_{\sigma_l^{k+1}+m}\le \beta_k +\epsilon\}}\right)
\end{equation}

From this, we find that for all $m < \min{\{\tau_{l+1}^{k+1}, \theta_l^{k+1}\}} - \sigma_l^{k+1}$, we have $U_{\sigma_l^{k+1}+m}^{k+1} - U_{\sigma_l^{k+1}}^{k+1} \leq Z_m$. Moreover, from the definition of $\alpha_k$, \eqref{eq:betak_equation}, we have 
\begin{equation}
E[Z_m|\sigma_l<\infty] 
%p(1-r)(\beta_k +\epsilon - (\beta_{k+1} + \epsilon)) 
= p(1-r)(\beta_k+\epsilon - (\beta_{k+1}+\epsilon)) = \alpha_{k+1}
\end{equation}
On $\{\sigma_l^{k+1} < \infty\}$ we can define a random walk $\Lambda_n = U_{\sigma_l^{k+1}}^{k+1} + \sum_{m\leq n}Z_m$, and so for all $n < \min\{\tau_l^{k+1} + 1, \theta_l^{k+1}\} - \sigma_l^{k+1}$, we have $\Lambda_n\geq U_{\sigma_l^{k+1}+n}^{k+1}$. Letting

\begin{equation}
\begin{split}
&\tau^{Z, u} = \inf\{n:\Lambda_n\geq(\beta + 4\epsilon)T_{\sigma_{l}+n}\mbox{ and } 
\\
& \tau^{Z,d} = \inf\{n:\Lambda_n < (\beta + \epsilon)T_{\sigma_{l}+n}\},
\end{split}
\end{equation}
and so we find 
\begin{equation}
\begin{split}
P(\tau_l^{u, k+1} < \tau_l^{d, k+1}&  < \theta_l^{d, k+1}, \sigma_l^{k+1} < \infty) \\
& \leq P(\tau^{Z, u}<\tau^{Z, d}, \sigma_l^{k+1} < \infty)
\\
&\leq P(\tau^{Z, u}< \infty, \sigma_l^{k+1} < \infty).
\end{split}
\end{equation}
Repeating the large deviations and Borel-Cantelli argument following \eqref{eq:arho} leads to the conclusion that 
$$ P(\{\sigma_l^{k+1}<\infty\} \cap \{\tau_l^{u,k+1} < \tau_l^{d,k+1}\} \mbox{ i.o. })=0.$$ 
As before, it is enough to show that 
$$ \limsup_{n\to\infty} \frac{(k+1) U_{n}^{k+1}}{T_n}\le \beta_{k+1}.$$ 
Indeed, if this holds, then from the law of large numbers, 
$$ \liminf_{n\to\infty} \frac{ (k+1) U_{n}^{k+1}}{T_n} \ge \frac{k+1}{\gamma } p(1-r)\left ( \beta_k - \beta_{k+1}\right ) = \frac{ (k+1) \alpha_{k+1}}{\gamma}= \beta_{k+1},$$ 
completing the proof. 

If $\liminf_{n\to\infty} \frac{(k+1)U_n^{k+1}}{T_n} > \beta_{k+1}$, then it follows from the law of large numbers that 
$$ \limsup_{n\to\infty} \frac{(k+1)U_n^{k+1}}{T_n} < \frac{k+1}{\gamma} p(1-r) \left ( \beta_k - \beta_{k+1}\right)=\beta_{k+1}, $$
a contradiction. Therefore we necessarily have $\liminf_{n\to\infty} \frac{(k+1)U_n^{k+1}}{T_n} \le \beta_{k+1}$. Now if, in addition, $\{\sigma_l<\infty\}$  finitely often, then necessarily $\limsup_{n\to\infty} \frac{(k+1) U_n^{k+1}}{T_{n}} \le \beta+4\epsilon$, and the conclusion holds. Alternatively, $\{\sigma_l<\infty\}$ i.o., but $\{\tau_l^{u,k+1}<\tau_l^{d,k+1}\}$ finitely often, which clearly gives the desired $\limsup$. The proof is complete. 
\end{proof} 

\begin{comment} \section{Conclusion}
We believe that our argument for convergence is very general and therefore could be used to show convergence on similar processes. In the future, we would like to investigate a similar process, where instead of weighting the inheritance selection by $k$, we would weight it by $\sqrt{k}$ or, more generally, any $f(k)$. Additionally, we would like to perform numerical analysis on our simulation results to explain the small error we see.
\end{comment}

\section*{Appendix}
\begin{proof}[Proof of Lemma \ref{lem:LDP}]
Let $W = (W_n : n \in \mathds{Z}_+)$ be a simple random walk starting from 0, with increments given by the rule

\begin{equation}
P(W_{n+1} - W_{n} + i) = 
\begin{cases}
p & i = 1 \\
1-p & i = -1 
\end{cases}
\end{equation}
Recall that $pr>1-p$. Therefore $\gamma = E(W_1) = p - (1-p) = \gamma>0$. Let $M_n = \min\{W_{j}: j \leq n\}$. As shown in \cite[Lemma 1]{BA_CLT}, we can construct $T = (T_{n}: n \in \mathds{Z}_+)$ and $M = (W_{n}:n \in \mathds{Z}_+)$ in the same probability space, with $T_n = M_n$ for all $n\in\Z_+$. Let $M_{\infty} = \inf_n M_n$. The transience of $M$ to infinity guarantees that $\zeta_1 \in (0,\infty)$ when $P(\{W_{n} < 0 $ for some $ n> 0\}) = e^{-\zeta_n}$. By the strong Markov property we get $P(M_\infty < -k) = e^{-k\zeta_1}$.

Note that $T_n \leq W_n - M_\infty$. Let $\epsilon' = \epsilon/2$. By the large deviations argument, there exists $c>0$, $\zeta_2 > 0$ such that 

\begin{equation}
P(T_n > n(\gamma+2\epsilon^\prime)) \leq P(W_n-M_\infty > n(\gamma + 2\epsilon^\prime))
\end{equation}

Breaking the above inequality into cases yields

\begin{equation}
\begin{aligned}
P(T_n > n(\gamma+2\epsilon^\prime)) & \leq P(W_n+\epsilon^\prime n > (\gamma + 2\epsilon^\prime), M_\infty > -2\epsilon^\prime) + P(M_\infty < -2\epsilon^\prime) \\
& \leq P(W_n > (\gamma + \epsilon^\prime)) + P(M_\infty < -2\epsilon^\prime) \\
& \leq c(e^{-n\zeta_{n_2}}+e^{-n\zeta_{n_1}})
\end{aligned}
\end{equation}

where, in the last step, large deviations is used to get a bound on $W$ and the bound on $M_\infty$ from earlier is substituted. Since $T_n \geq W_n$, we get

\begin{equation}
\begin{aligned}
P\left(T_n < n(\gamma - 2\epsilon^\prime)\right) & \leq P\left(W_n < n(\gamma - \epsilon^\prime)\right) \\
& \leq ce^{-n\zeta_2}
\end{aligned}
\end{equation}

\end{proof}
\section*{Acknowledgement}
Iddo Ben-Ari would like to thank Rinaldo Schinazi for proposing the model, posing the  problem, fruitful discussions and great ideas, and for Rahul Roy and Hideki Tanemura for bringing their work \cite{roy} to our attention.   
\bibliographystyle{amsalpha}
\bibliography{mybib}
\end{document}